\definecolor{niceblue}{rgb}{0,0,0.7}
\newtheorem{theorem}{Theorem}[section]
\newtheorem{lemma}[theorem]{Lemma}
\newtheorem{corollary}[theorem]{Corollary}
\theoremstyle{definition}
\newtheorem{remark}[theorem]{Remark}
\newtheorem{definition}[theorem]{Definition}
\newcommand{\eref}[1]{(\ref{e.#1})}
\newcommand{\tref}[1]{Theorem \ref{t.#1}}
\newcommand{\lref}[1]{Lemma \ref{l.#1}}
\newcommand{\cref}[1]{Corollary \ref{c.#1}}
\newcommand{\sref}[1]{Section \ref{s.#1}}
\newcommand{\dref}[1]{Definition \ref{d.#1}}
\numberwithin{equation}{section}
\newcommand{\R}{\mathbb{R}}
\newcommand{\grad}{\nabla}
\def\XXint#1#2#3{{\setbox0=\hbox{$#1{#2#3}{\int}$ }
\vcenter{\hbox{$#2#3$ }}\kern-.6\wd0}}
\definecolor{darkgreen}{rgb}{0,0.4,0}
\begin{document}

\title[Convex comparison for anisotropic Bernoulli problems]{On convex comparison for exterior Bernoulli problems with discontinuous anisotropy}
\keywords{Bernoulli problem, comparison principle, discontinuous anisotropy}
\subjclass{35R35, 35E10}
\author[W. M. Feldman]{William M Feldman}
\address{Department of Mathematics, University of Utah, Salt Lake City, Utah, USA}
\email{feldman@math.utah.edu}
\author[N. Po\v{z}\'{a}r]{Norbert Po\v{z}\'{a}r}
\address{Faculty of Mathematics and Physics, Institute of Science and Engineering, Kanazawa University, Kakuma, Kanazawa 920-1192, Japan}
\email{npozar@se.kanazawa-u.ac.jp}
\maketitle
\begin{abstract}
    We give a new proof of a convex comparison principle for exterior Bernoulli free boundary problems with discontinuous anisotropy.
\end{abstract}
\section{Introduction}

Let $K$ be a closed convex subset of $\R^d$ with nonempty interior.  We will consider the convexity properties of the minimal supersolution of the anisotropic Bernoulli problem
\begin{equation}\label{e.minsup}
\left\{
\begin{aligned} 
 \Delta u &= 0 && \hbox{in } \ \{u>0\} \setminus K, \\
 |\grad u| &= Q(n_x) && \hbox{on } \ \partial \{u>0\}, \\
 u &= 1 && \hbox{on } K.
 \end{aligned}  
\right.
 \end{equation}
Here $n_x$ is the inner unit normal to $\partial\{u > 0\}$ at $x$ and $Q: S^{d-1} \to (0, \infty)$ is only assumed to be bounded above and below and upper semi-continuous 
\[ Q(n) \geq \limsup_{n' \to n} Q(n').\]
Such problems, with discontinuous anisotropy, naturally arise from periodic homogenization scaling limits \cite{FeldmanSmart,Feldman,CaffarelliLee,Kim,CaffarelliMellet}.  

In the context of homogenization and other scaling limits, one cannot directly show the minimality property of the limiting supersolutions.  Instead the following weak subsolution property is natural.  Essentially this is a viscosity subsolution property that only allows to test the free boundary condition with one-dimensional test functions.

\begin{definition}
\label{d.one-dimensional}
We say that $\varphi \in C^\infty(U)$ is \emph{one-dimensional in $U$} if it is of the form $\varphi(x) = f(x\cdot p)$ in $U$ for some $f\in C^\infty(\R)$ and $p \in \R^d$, $|p| = 1$.
\end{definition}

\begin{definition}\label{d.weaksub}
A \emph{weak subsolution} of \eref{minsup} is a nonnegative function $u \in C(\R^d)$ that is compactly supported, satisfies $u \leq 1$ on $K$, is harmonic in $\{u > 0\} \setminus K$, and such that, whenever $U$ is an open neighborhood and one-dimensional $\varphi$ with $\Delta \varphi < 0$ touches $u$ from above in $\overline{\{u > 0\}}$ at $x \in \partial\{ u > 0 \} \cap U$ with strict ordering $u < \varphi$ on $\overline{\{u > 0\}} \cap \partial U$, then
\[|\nabla \varphi(x)| \geq Q(p),\]
where $p$ is from \dref{one-dimensional}.
\end{definition}

The following supersolution definition is standard.

\begin{definition}
\label{d.supersol}
A supersolution of \eqref{e.minsup} is a nonnegative function $u\in C(\R^d)$ that is compactly supported, $u \geq 1$ on $K$, is harmonic in $\{u > 0\} \setminus K$, and whenever $\varphi \in C^\infty(U)$, $U$ open, $\Delta \varphi > 0$ and $\nabla \varphi \neq 0$ in $U$, touches $u$ from below at $x \in \partial \{u > 0\} \cap U$ then
\begin{align*}
|\nabla \varphi(x)| \leq Q\left(\frac{\grad \varphi}{|\grad \varphi|}(x)\right).
\end{align*}
\end{definition}

We give a new proof of the following theorem.

\begin{theorem}
A supersolution of \eref{minsup} is minimal if and only if it satisfies the weak subsolution property \dref{weaksub}. In other words, a supersolution that is also a weak subsolution is the unique minimal supersolution. Furthermore the minimal supersolution has convex superlevel sets.
\end{theorem}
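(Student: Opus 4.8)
The plan is to reduce the whole statement to two facts: (i) a \emph{comparison principle}, that every weak subsolution lies below every supersolution; and (ii) the existence of a \emph{convex} supersolution that is also a weak subsolution. Granting these, the theorem is immediate: by (i), a supersolution that is also a weak subsolution lies below every supersolution, hence is minimal, and two such objects are ordered both ways, hence equal; so ``supersolution $+$ weak subsolution'' characterizes a unique function. By (ii) this function exists and has convex superlevel sets, so the (necessarily unique) minimal supersolution is convex; and a supersolution is minimal exactly when it equals this function, i.e.\ exactly when it is a weak subsolution. Everything thus comes down to (i) and (ii).

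For (i), let $u$ be a weak subsolution, $v$ a supersolution, fix $z_0\in\operatorname{int}K$, and for $\lambda\in(0,1]$ set $u_\lambda(x):=u(z_0+\lambda^{-1}(x-z_0))$. A scaling computation shows each $u_\lambda$ is again a weak subsolution, and that for a one-dimensional test function touching $u_\lambda$ from above the conclusion strengthens to $|\nabla\varphi|\ge\lambda^{-1}Q(p)$. Since every weak subsolution satisfies $u\le 1$ by the maximum principle, and since $\operatorname{supp}u_\lambda$ shrinks into $\operatorname{int}K$ where $v\ge 1$, we have $u_\lambda\le v$ for $\lambda$ small; let $\lambda^\ast$ be the supremum of such $\lambda$ and suppose $\lambda^\ast<1$. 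At a first contact point $x_0$ the strong maximum principle rules out contact inside the common positivity set, and (with some care) inside $K$, so $x_0\in\partial\{v>0\}\cap\partial\{u_{\lambda^\ast}>0\}$. \emph{This is where the main work lies.} After replacing $v$ near $x_0$ by a suitable regularization whose free boundary has a one-sided ball condition and a well-defined inner normal $q$, one produces a one-dimensional $\varphi$ with $\Delta\varphi<0$ touching $u_{\lambda^\ast}$ from above and a smooth $\psi$ with $\Delta\psi>0$ touching $v$ from below, both in the direction $q$, with matching gradients at $x_0$; letting the regularization parameter go to zero and using the upper semicontinuity of $Q$ to pass to the limit in the free boundary condition gives
\[
\lambda^{\ast-1}Q(q)\le|\nabla\varphi(x_0)|=|\nabla\psi(x_0)|\le Q(q),
\]
contradicting $\lambda^\ast<1$. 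Hence $u\le v$. Extracting the matched one-dimensional test functions at the contact point while controlling the discontinuous anisotropy is the principal obstacle I anticipate.

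For (ii), let $\mathcal C$ be the class of convex supersolutions; it is nonempty (the capacitary potential of a large ball belongs to it, using that $Q$ is bounded below). Let $\Omega_\ast$ be the intersection of the positivity sets of members of $\mathcal C$, a convex open set containing $K$ with a buffer (using that $Q$ is bounded above), and let $u_c$ be the capacitary potential of the convex ring $\Omega_\ast\setminus K$, extended by zero; by comparison of harmonic functions, $u_c\le v$ for every $v\in\mathcal C$. By the classical results of Gabriel and Lewis on capacitary potentials of convex rings, $\{u_c>t\}$ is convex for every $t$. To see $u_c$ is a supersolution: at a point of $\partial\Omega_\ast$ that already lies on some $\partial\{v>0\}$ with $v\in\mathcal C$, a lower test function for $u_c$ is a lower test function for $v$; at the remaining points, $\partial\Omega_\ast$ is an accumulation of boundaries $\partial\{v>0\}$, and the inequality follows by passing to the limit in $|\nabla v|\le Q(n)$ along normals converging to the normal of $\partial\Omega_\ast$, again by upper semicontinuity of $Q$. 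Thus $u_c\in\mathcal C$ is its smallest element.

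It remains to show $u_c$ is a weak subsolution. If not, the property fails at some $x_0\in\partial\Omega_\ast$ via a one-dimensional $\varphi=f(x\cdot p)$ with $f''<0$, $f(x_0\cdot p)=0$, $|\nabla\varphi(x_0)|<Q(p)$; concavity of $f$ and $u_c\ge 0$ force $\{x\cdot p=x_0\cdot p\}$ to support $\Omega_\ast$ at $x_0$ with inner normal $p$, and give $u_c(x)\le|\nabla\varphi(x_0)|\,(x-x_0)\cdot p$ near $x_0$. Now shave: for small $\delta>0$ let $v_\delta$ be the capacitary potential of $\bigl(\Omega_\ast\cap\{x\cdot p\ge x_0\cdot p+\delta\}\bigr)\setminus K$, again a convex ring. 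Then $v_\delta$ has convex superlevel sets, $v_\delta\le u_c$ with strict inequality on the newly created facet, and $v_\delta$ is a supersolution: on the unchanged part of the boundary $|\nabla v_\delta|\le|\nabla u_c|\le Q$ since $v_\delta\le u_c$; at the convex edges of the facet $\nabla v_\delta$ vanishes; and on the interior of the facet, whose normal is exactly $p$, a blow-up of $v_\delta$ about $x_0$ at scale $\delta^{-1}$ converges to a harmonic function on a half-space dominated by the linear barrier $|\nabla\varphi(x_0)|\,(\cdot)$, whence $|\nabla v_\delta|\le|\nabla\varphi(x_0)|+o(1)<Q(p)$ for $\delta$ small. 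So $v_\delta$ is a strictly smaller convex supersolution, contradicting minimality of $u_c$ in $\mathcal C$. This gives (ii) and, with (i), the theorem.
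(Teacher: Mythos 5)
Your overall architecture (a comparison principle plus existence of a convex solution) is reasonable, and part (ii) is close in spirit to the paper's Perron construction. But step (i) contains the central gap, and you have in fact flagged it yourself: the claim that at the first contact point $x_0$ one can produce a one-dimensional $\varphi$ touching $u_{\lambda^\ast}$ from above and a $\psi$ touching $v$ from below \emph{with matching gradients at $x_0$} is exactly what the weak subsolution condition does not give. A one-dimensional test function that lies above $u_{\lambda^\ast}$ throughout a neighborhood (with strict ordering on the boundary of that neighborhood, as \dref{weaksub} requires) must have slope at least $\max|\grad u_{\lambda^\ast}|$ over its entire contact set, so the subsolution condition only yields $\max_F|\grad u_{\lambda^\ast}|\ge\lambda^{\ast-1}Q(p)$ on the contact facet $F$; the supersolution $v$, however, may touch at a different point of $F$ where $|\grad u_{\lambda^\ast}|$ is strictly smaller than this maximum. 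Nothing in your sliding argument forces the two points to coincide, and without convexity of at least one of the two positivity sets there is no mechanism to upgrade the max to a min. This is precisely the difficulty the paper isolates, and it resolves it only in the \emph{convex} setting: when the supersolution has convex support one works on the supporting-hyperplane facet directly (Lemma 3.5), and when the subsolution has convex support one upgrades $\max_F|\grad u|\ge Q(n)$ to $\min_F|\grad u|\ge Q(n)$ using quasi-convexity of $|\grad u|$ on facets (via Caffarelli--Spruck convexity of level sets) together with Straszewicz's theorem, since at \emph{exposed} points the weak and strong conditions agree (Lemma 3.6). Your proposed general comparison between an arbitrary weak subsolution and an arbitrary supersolution is stronger than what the theorem needs and is not obtained by these methods; the paper instead routes every comparison through the convex minimal supersolution.

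A second, related gap: your limit passage at the contact point invokes ``upper semicontinuity of $Q$,'' but for the subsolution side the inequality goes the wrong way. To conclude $|\grad u|(x_0)\ge Q(n)$ from $|\grad u|(x_j)\ge Q(n_{x_j})$ along approximating points with normals $n_{x_j}\to n$ one needs $\liminf_j Q(n_{x_j})\ge Q(n)$, i.e.\ lower semicontinuity; usc only gives $\limsup_j Q(n_{x_j})\le Q(n)$. The paper handles this by first proving everything for continuous $Q$ and then approximating an upper semicontinuous $Q$ monotonically from above by continuous $Q^j\searrow Q$, observing that the corresponding minimal supersolutions increase to the minimal supersolution for $Q$ and that convexity and the supersolution property pass to the limit (Corollary 3.9). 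Your proposal has no counterpart to this approximation step, and without it the contact-point argument cannot close for discontinuous anisotropies. Your part (ii) shaving construction also needs care for the same reason: the linear bound $u_c\le|\grad\varphi(x_0)|\,((x-x_0)\cdot p)_+$ holds only in the test neighborhood $U$, while the facet created by cutting with the global half-space $\{x\cdot p\ge x_0\cdot p+\delta\}$ may extend well beyond $U$, where you have no gradient control.
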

This result has already been proved in \cite{FeldmanSmart} by Smart and the first author.  This comparison principle, for discontinuous anisotropies, was an important component in the proof of the homogenization scaling limit of minimal supersolutions in that work and in \cite{Feldman}.  This paper gives a new proof which we feel has valuable simplicity and intuition. The main new ideas in this paper are in \lref{subsol-grad-bound} and \cref{cts-approx} below. We give a brief explanation about those new ideas here. 

The difficulty in using the weak subsolution condition is that, on a facet $F$ (see \sref{extreme-points} for definitions) with inward normal $n$ of the free boundary of a quasi-convex subsolution $u$, the weak subsolution condition only guarantees that 
\[ \max_{F} |\grad u| \geq Q(n).\]
Unfortunately if a supersolution touches $u$ from above on this facet it may not touch at the point where the maximum in the subsolution condition is saturated. We would like instead the strong subsolution condition
\begin{equation}\label{e.strong-subsolution}
    \min_{F} |\grad u| \geq Q(n).
\end{equation}
The first idea, exploited in \lref{subsol-grad-bound}, is that the weak and strong subsolution conditions above are the same at exposed points, facets $F$ which are singletons.  By Straszewicz's Theorem, \tref{Straszewicz} quoted below, all extreme points are limits of exposed points so, along with some additional convexity arguments we can show that weak subsolutions actually satisfy the strong condition \eref{strong-subsolution} on all facets at least when $Q$ is continuous.

This idea does not directly work in the case of upper semicontinuous $Q$; if it did, all facets of a solution would actually be trivial.  However it turns out, somewhat unexpectedly, that we can use a simple monotone continuous approximation argument of the upper semicontinuous $Q$ to prove a comparison principle in the discontinuous case too.  This is the content of \cref{cts-approx} below.

There have been several works on existence, uniqueness, convexity, and regularity of solutions of Bernoulli-type problem \eref{minsup} with $Q(n) \equiv 1$ or continuous. Classical work of Beurling \cite{Beurling}, and later Schaeffer \cite{Schaeffer} and Acker \cite{Acker}, showed the existence/uniqueness of a convex solution in $2$-d. Hamilton \cite{Hamilton} gave a different proof in $2$-d by a Nash-Moser implicit function theorem.  Later Henrot and Shahgholian \cite{HenrotShahgholian1997,HenrotShahgholian2002} gave proofs based on maximum principle including nonlinear problems with $x$-dependent boundary conditions (satisfying a concavity assumption). Bianchini \cite{Bianchini2012} extended this to anisotropic interior Bernoulli problems and to Finsler metrics \cite{Bianchini2022}. The present paper is most similar to these works \cite{HenrotShahgholian1997,HenrotShahgholian2002,Bianchini2012,Bianchini2022}; for us the main difficulty is in proving comparison principle with only the weak subsolution condition.

\subsection*{Acknowledgments}  The authors appreciate discussions with Inwon Kim which helped to motivate the proof idea. W. Feldman was partially supported by the NSF grant DMS-2009286. N. Po\v{z}\'ar was partially supported by JSPS KAKENHI Kiban~C Grant No. 23K03212.

\section{Convexity properties}
  \subsection{Extreme and exposed points of convex sets}\label{s.extreme-points} We recall several notions and results from convex analysis; see for example \cite[Sec.~18]{Rockafellar}.

For a convex set $X$, a \emph{face} is a convex subset $Y \subset X$ such that every line segment in $X$ with relative interior point\footnote{$(1-t) x + ty$ for $t \in (0,1)$ are relative interior points of segment $x$--$y$.} in $Y$ has both end points in $Y$. If a face of $X$ is a point, it is called an \emph{extreme point} of $X$.

The convex set $Y = \{x \in X : h(x) = \max_X h\}$ for some linear function $h$ is a face of $X$, and it is called an \emph{exposed face}.
If an exposed face $X$ is a point, it is called an \emph{exposed point} of $X$. Also we call a \emph{facet} of $X$ to be a nontrivial exposed face, i.e. not the entire set $X$.

The following two results about extreme points and exposed points will play a key role.

\begin{lemma}[{\cite[Cor.~18.5.1]{Rockafellar}}]
\label{l.extremehull}
A closed bounded convex set is the convex hull of its extreme points.
\end{lemma}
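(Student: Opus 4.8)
The plan is to prove the nontrivial inclusion by induction on the dimension $n$ of the affine hull $\operatorname{aff}(X)$ of the compact convex set $X$. One direction is free: every extreme point lies in $X$ and $X$ is convex, so $\operatorname{conv}(\operatorname{ext} X) \subseteq X$, where $\operatorname{ext} X$ denotes the set of extreme points. The content is the reverse inclusion, namely that every $x \in X$ is a finite convex combination of extreme points of $X$, and this is the statement I would set up the induction to prove. The base case $n = 0$ is trivial, since then $X$ is a single point and is its own unique extreme point.

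For the inductive step I would work inside $\operatorname{aff}(X)$, so that $X$ has nonempty relative interior, and split into two cases according to whether $x$ lies in the relative interior or on the relative boundary of $X$. If $x$ is a relative boundary point, a supporting hyperplane at $x$ furnishes a linear function $h$, nonconstant on $\operatorname{aff}(X)$, with $h(x) = \max_X h$; the associated exposed face $F = \{y \in X : h(y) = \max_X h\}$ is a compact convex set contained in the hyperplane $\{h = \max_X h\}$, hence $\dim \operatorname{aff}(F) < n$. By the inductive hypothesis $x$ is a convex combination of extreme points of $F$, and the supporting claim below identifies $\operatorname{ext} F \subseteq \operatorname{ext} X$, giving $x \in \operatorname{conv}(\operatorname{ext} X)$. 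If instead $x$ is a relative interior point, I would pick any line $\ell \subseteq \operatorname{aff}(X)$ through $x$; compactness forces $\ell \cap X$ to be a closed segment $[y_1, y_2]$ with $y_1, y_2$ on the relative boundary, so $x$ is a convex combination of $y_1$ and $y_2$, each of which is handled by the boundary case just treated.

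The one supporting claim I would isolate is that \emph{the extreme points of a face $F$ of $X$ are extreme points of $X$}. This follows directly from the definition of face recalled in \sref{extreme-points}: if $e \in \operatorname{ext} F$ were a relative interior point of some nondegenerate segment $[a,b] \subseteq X$, then, because $F$ is a face and $e \in F$, both endpoints $a$ and $b$ would necessarily lie in $F$, contradicting that $e$ is extreme in $F$. Note also that the exposed face $F$ used above is a genuine face, by the characterization recorded in \sref{extreme-points}, so this claim applies to it. I expect the main obstacle to be organizational rather than deep: one must run the induction on the affine dimension (not the ambient dimension $d$), verify that the exposed face $F$ has strictly smaller affine dimension so the inductive hypothesis genuinely applies, and invoke the existence of a supporting hyperplane at each relative boundary point, which is a standard separation fact from convex analysis. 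Everything else reduces to the routine interior-to-boundary argument via a line through $x$.
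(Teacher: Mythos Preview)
The paper does not prove this lemma; it merely cites \cite[Cor.~18.5.1]{Rockafellar} and moves on. Your proposal supplies the standard proof of Minkowski's theorem by induction on the affine dimension, and it is correct: the supporting-hyperplane step reduces boundary points to a face of strictly smaller dimension, the line-through-$x$ step reduces interior points to boundary points, and your supporting claim that $\operatorname{ext} F \subseteq \operatorname{ext} X$ for any face $F$ is exactly what is needed to close the induction. There is nothing to compare against in the paper itself, so your argument stands on its own as a complete and conventional justification of the cited result.
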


\begin{theorem}[Straszewicz's Theorem{\cite[Th.~18.6]{Rockafellar}}]
\label{t.Straszewicz}
Let $X$ be a closed convex set. Every extreme point of $X$ is the limit of a sequence of exposed points of $X$.
\end{theorem}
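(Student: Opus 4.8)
The plan is to realize each extreme point as a limit of \emph{farthest-point exposed points}: points $e'\in X$ that are the unique maximizer of $y\mapsto|y-z|$ over $X$ for some center $z$. Two elementary facts drive the argument. First, a unique farthest point is exposed: if $\{e'\}=\operatorname{argmax}_{y\in X}|y-z|$ and $\rho=|e'-z|$, then $X\subset\overline B(z,\rho)$, and expanding $|y-z|^2\le\rho^2$ about $e'$ gives $\langle y-e',e'-z\rangle\le-\tfrac12|y-e'|^2\le0$ with equality only at $y=e'$; thus the linear function $h(y)=\langle y,e'-z\rangle$ attains its maximum over $X$ uniquely at $e'$, so $e'$ is exposed in the sense of \sref{extreme-points}. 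Second, for \emph{bounded} $X$ the function $\rho(z)=\max_{y\in X}|y-z|$ is finite and convex, hence locally Lipschitz and differentiable off a Lebesgue-null set; at a differentiability point $z\notin X$ the farthest-point set $\Phi(z)=\{y\in X:|y-z|=\rho(z)\}$ is a singleton (two distinct farthest points would yield two distinct unit vectors $\tfrac{z-y}{\rho}$ in the subdifferential $\partial\rho(z)$). So farthest-point exposed points arise from a dense set of centers.

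I would first treat the bounded case, which is the heart of the matter and all that the applications require. Fix an extreme point $e$ and $\epsilon>0$. Since $e$ is extreme, $e\notin\operatorname{conv}(X\setminus\{e\})$, and for bounded $X$ the set $\operatorname{conv}(X\setminus B(e,\epsilon))$ is compact and still omits $e$ (any Carath\'eodory representation of $e$ by points of $X\setminus B(e,\epsilon)$ would exhibit $e$ as a convex combination of points $\ne e$). Separating $e$ from it produces a unit vector $w$ and a gap $\gamma>0$ with $\langle y,w\rangle\le\langle e,w\rangle-\gamma$ whenever $|y-e|\ge\epsilon$. Now push the center to infinity opposite to $w$: for $z_t=e-tw$,
\[
|y-z_t|^2=|y-e|^2+2t\langle y-e,w\rangle+t^2,
\]
so every $y$ with $|y-e|\ge\epsilon$ is strictly closer to $z_t$ than $e$ once $t>\operatorname{diam}(X)^2/(2\gamma)$; hence $\Phi(z_t)\subset\overline B(e,\epsilon)$. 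By upper semicontinuity of $\Phi$ I may then perturb $z_t$ to a nearby differentiability point of $\rho$, obtaining a \emph{unique} farthest point $e'\in B(e,2\epsilon)$, which is therefore an exposed point within $2\epsilon$ of $e$. Letting $\epsilon\downarrow0$ produces exposed points converging to $e$.

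For the general closed (possibly unbounded) case I would reduce to a compact cap. The separation step survives: if $e$ lay in $\overline{\operatorname{conv}(X\setminus B(e,\epsilon))}$, then a Carath\'eodory decomposition of approximating points, combined with the fact that a closed convex set with an extreme point is line-free — so its recession cone $0^+X$ is pointed and lies in an open half-space — would force $e$ into the relative interior of a genuine segment of $X$, contradicting extremeness. Separation then yields $w$ with $\sup_X\langle\cdot,w\rangle<\infty$, maximizing face contained in $\overline B(e,\epsilon)$, and moreover $\langle u,w\rangle\le-\delta_0<0$ for all unit $u\in 0^+X$. Applying the bounded construction to the compact cap $P=X\cap\{\langle\cdot,w\rangle\ge\langle e,w\rangle-\tau\}\subset\overline B(e,\epsilon)$ gives $e'\in P$ maximizing $h(y)=\langle y,e'-z_t\rangle$ uniquely over $P$. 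To upgrade this to all of $X$ I would estimate $h$ on $X\setminus P$: for points escaping to infinity the recession bound gives $h(y)-h(e)\le|y-e|(\epsilon-t\delta_0/2)<0$ once $t$ is large, so (as $h(e)\le h(e')$) such points do not compete; the sphere estimate handles the bounded remainder. Hence $h$ maximizes over all of $X$ uniquely at $e'$, so $e'$ is exposed in $X$ and within $\epsilon$ of $e$.

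The main obstacle is exactly this last upgrade in the unbounded case: the farthest-point device is intrinsically global and needs boundedness, so one must verify by hand that the hyperplane exposing $e'$ relative to the cap continues to support the entire unbounded set. This is what forces the detour through the recession cone and the line-free property of sets possessing extreme points; for bounded $X$ the difficulty disappears and the argument is short.
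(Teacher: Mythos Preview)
The paper does not prove Straszewicz's Theorem at all; it is quoted verbatim from Rockafellar \cite[Th.~18.6]{Rockafellar} and used as a black box in the proof of \lref{subsol-grad-bound}. So there is no ``paper's own proof'' to compare against.

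That said, your bounded-case argument is correct and is essentially the classical proof: separate the extreme point $e$ from the compact set $\operatorname{conv}(X\setminus B(e,\epsilon))$, push a center $z_t$ far away in the separating direction so that the farthest-point set $\Phi(z_t)$ is trapped near $e$, and then perturb $z_t$ to a differentiability point of the convex function $\rho$ to force $\Phi$ to be a singleton, hence an exposed point. This is exactly the large-ball strategy Rockafellar uses, so had the paper reproduced the cited proof your approach would match it. For the purposes of this paper only the bounded case is ever invoked (the sets $\overline{V}$ in \lref{subsol-grad-bound} are compact), so this already suffices.

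Your treatment of the unbounded case is where the real work hides, and your sketch is not yet a proof. The assertion that $e\notin\overline{\operatorname{conv}(X\setminus B(e,\epsilon))}$ does follow from line-freeness, but ``a Carath\'eodory decomposition of approximating points \dots would force $e$ into the relative interior of a genuine segment'' skips the actual compactness argument: you need to control the case where some of the Carath\'eodory vertices run off to infinity along recession directions while their coefficients go to zero, and show the limit still produces a nontrivial segment through $e$. Likewise, the final ``upgrade'' step---showing the hyperplane exposing $e'$ in the cap $P$ still supports all of $X$---mixes two different estimates (a linear recession bound for far-away points and a quadratic sphere bound for nearby ones) without checking they are compatible for a \emph{single} choice of $t$ and perturbed center. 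These are fixable, but as written the unbounded case is an outline rather than a proof.
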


\subsection{Convexity properties of a gradient of a harmonic function}

Let $\emptyset \neq U \subset\subset V$ be two bounded open convex sets. We also assume that $V$ is inner regular. Let $v$ be the unique solution of
\begin{align}
\label{dirichlet}
\left\{\begin{aligned}
\Delta v &= 0 && \text{in } V \setminus \overline{U},\\
v &= 1 && \text{on } \partial U,\\
v &= 0 && \text{on } \partial V.
\end{aligned}\right.
\end{align}
By \cite[Th.~2.1]{CaffarelliSpruck} the super-level sets $\{v > t\}$ are convex for all $t \in (0, 1)$. This implies convexity of the super-level sets of $|\grad v|$ on the facets of $\partial V$.

\begin{lemma}
\label{l.grad-convex}
Let $U$, $V$ and $v$ be as above and let $\Lambda$ be a tangent hyperplane to $\partial V$. Recall that $V$ is uniformly inner regular. Set $F = \partial V \cap \Lambda$. Then $1 / |\grad v|$ is convex on $F$. In particular, its sublevel sets
\begin{align*}
\{x \in F: 1 / |\grad v(x)| < 1/\tau\} = \{x \in F: |\grad v(x)| > \tau\}
\end{align*}
are convex for any $\tau > 0$.
\end{lemma}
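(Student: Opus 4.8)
The plan is to obtain the convexity of $1/|\grad v|$ on $F$ as a limit, as $t \to 0^+$, of the convexity of the superlevel sets $\{v > t\}$. After a rigid motion we may assume $\Lambda = \{x_d = 0\}$ and $V \subset \{x_d > 0\}$, so that the inner unit normal $n = e_d$ is the same at every point of $F$; recall that $F$, being an exposed face of $V$, is convex. The geometric picture behind the argument is that near $F$ the convex set $\{v > t\}$, after rescaling the $x_d$-coordinate by $1/t$, converges to the epigraph over $F$ of the function $x' \mapsto 1/|\grad v(x',0)|$; since a pointwise limit of convex objects is convex, we are done once this picture is made precise.

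First I would record the boundary regularity that makes $|\grad v|$ meaningful on $F$. When $F$ is a genuine facet of dimension $d-1$, around any $x_0 \in \operatorname{relint} F$ the set $V$ coincides near $x_0$ with the half-ball $B_\rho(x_0) \cap \{x_d > 0\}$: this is a short convexity argument using that $\Lambda$ supports $V$ and that $\operatorname{relint} F$ is relatively open in $\Lambda$. Hence odd reflection across $\Lambda$ extends $v$ to a harmonic — thus $C^\infty$ — function near $x_0$, and since $v$ vanishes on $\Lambda$ there we get $\grad v(x_0) = \partial_{x_d} v(x_0)\, e_d =: g(x_0)\, e_d$, with $g(x_0) > 0$ by the Hopf lemma (or the strong maximum principle for the extension). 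So $|\grad v| = g > 0$ is smooth on $\operatorname{relint} F$. If instead $\dim F < d - 1$, then along $\operatorname{relint} F$ the harmonic function $v$ vanishes to order $> 1$ at the convex edge, so $|\grad v| \equiv 0$ and $1/|\grad v| \equiv +\infty$ there, making the statement trivial; I would dispatch this case in one line.

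For the core step, fix $x_1, x_2 \in \operatorname{relint} F$ and $\theta \in (0,1)$, set $x_\theta := (1-\theta)x_1 + \theta x_2 \in \operatorname{relint} F$, and write $a_i := 1/g(x_i)$ and $b := (1-\theta)a_1 + \theta a_2$. For small $t > 0$ the points $x_i + a_i t\, e_d$ lie in $V \setminus \overline{U}$ (they sit in the boundary collar disjoint from $U$ since $U \Subset V$), and differentiability of $v$ at $x_i$ gives $v(x_i + a_i t\, e_d) = t + o(t)$; hence for any fixed $\epsilon \in (0,1)$ and all small $t$ these points lie in $\{v > (1-\epsilon) t\}$, which is convex by \cite[Th.~2.1]{CaffarelliSpruck}. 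Taking the same convex combination, $(1-\theta)(x_1 + a_1 t\, e_d) + \theta(x_2 + a_2 t\, e_d) = x_\theta + b t\, e_d$ also lies in $\{v > (1-\epsilon) t\}$, i.e. $v(x_\theta + b t\, e_d) > (1-\epsilon) t$. Dividing by $b t$ and letting $t \to 0^+$ gives $g(x_\theta) \geq (1-\epsilon)/b$, and then $\epsilon \to 0$ yields $1/g(x_\theta) \leq b = (1-\theta)/g(x_1) + \theta/g(x_2)$. This is precisely the convexity of $1/|\grad v|$ on $\operatorname{relint} F$ (it extends to $F$ under the natural lower semicontinuous reading of $1/|\grad v|$ on $\operatorname{relbd} F$), and the stated convexity of the sublevel sets follows, either formally from convexity of the function or directly from the same superlevel-set inclusion.

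I expect the only real friction to be the regularity bookkeeping of the second paragraph: carefully interpreting "$|\grad v|$ on $F$" as the one-sided normal derivative, justifying from the hypotheses on $V$ that $\partial V$ is flat near $\operatorname{relint} F$, and handling the degenerate lower-dimensional case. The convexity propagation itself is a soft limiting argument and should be routine once these points are settled.
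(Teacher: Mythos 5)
Your core limiting argument is essentially the paper's: both proofs feed the Caffarelli--Spruck convexity of the superlevel sets $\{v>\theta\}$ into a $\theta\to 0^+$ limit to obtain convexity of the reciprocal normal derivative. The paper packages this as convexity of $h_\theta(t)=\inf\{h>0: v(\xi_t+hn)>\theta\}$ together with $|\grad v(\xi_t)|=\lim_{\theta\to 0}\theta/h_\theta(t)$, while you test the convex set $\{v>(1-\epsilon)t\}$ at three explicit points; these are the same computation, and that part of your write-up is correct.

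The genuine gap is your treatment of the case $\dim F<d-1$. You claim that there $v$ vanishes to order greater than one ``at the convex edge,'' so that $|\grad v|\equiv 0$ and the statement is vacuous. This contradicts the hypothesis, stated in the lemma, that $V$ is uniformly inner regular: at every point of $\partial V$, including points of a lower-dimensional exposed face, there is an interior tangent ball, and Hopf's lemma then bounds the normal difference quotient of $v$ from below by a positive constant, so $|\grad v|>0$ on all of $F$. Your picture of higher-order vanishing is correct for genuine edges and corners (e.g.\ a square), but those are exactly what inner regularity rules out; a uniformly inner regular convex body can perfectly well have lower-dimensional exposed faces, e.g.\ the segment along which a supporting plane touches the convex hull of two disjoint balls in $\R^3$. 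Moreover this case is not a technicality: in the application, \lref{subsol-grad-bound}, the facet $V_n$ is typically a point or a low-dimensional set, and the content of the present lemma there is precisely that $\{|\grad v|\geq Q(n)\}$ with $Q(n)>0$ is convex on such a set. The repair is not difficult --- your three-point argument goes through verbatim in the lower-dimensional case once you know $v(x+hn)=|\grad v(x)|\,h+o(h)$ with $|\grad v(x)|>0$ at each $x\in F$ --- but your reflection argument only supplies this when $F$ is top-dimensional. In general one needs the $C^{1,1}$ regularity of $\partial V$ (convexity plus uniform inner regularity traps $\partial V$ between a supporting hyperplane and an interior sphere) together with standard boundary gradient regularity, cf.\ \cite[Lemma 3.13]{FeldmanSmart} or the blow-up in \lref{supersoln-blowup}, or else the paper's $h_\theta$ device, which avoids two-sided differentiability altogether and treats all dimensions of $F$ uniformly.
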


\begin{proof}
Let $n$ be the unit normal of $\Lambda$ so that it is also the inner unit normal of $\partial V$ on $F$.

Fix $\xi_0, \xi_1 \in F$ and set $\xi_t := (1 - t) \xi_0 + t \xi_1$. For any $\theta > 0$ we define
\begin{align*}
h_\theta(t) := \inf \{h > 0: v(\xi_t + h n) > \theta\}, \qquad t \in [0,1].
\end{align*}
By continuity of $v$ we have $h_\theta(t) > 0$. Since $\{v > \theta\} \nearrow V$ as $\theta \to 0$ and $\{v > 0\}$ is inner regular at $\xi_t$ we have $h_\theta(t)  < \infty$ for sufficiently small $\theta > 0$ for all $t \in [0, 1]$. In fact, $h_\theta(t) \to 0$ as $\theta \to 0$.
Since $\{v > \theta\}$ is convex by \cite{CaffarelliSpruck}, $h_\theta$ is convex on $[0,1]$. 

Therefore
\begin{align*}
|\grad v(\xi_t)| = \grad v(\xi_t) \cdot n = \lim_{\theta \to 0} \frac{v(\xi_t + h_\theta(t) n)}{h_\theta(t)} = \lim_{\theta \to 0} \frac{\theta}{h_\theta(t)}.
\end{align*}
By inner regularity and Hopf's lemma $|\grad v(\xi_t)| > 0$ for all $t \in [0,1]$ and hence
we deduce that $t \mapsto 1 / |\grad v(\xi_t)|$ is convex since $t \mapsto h_\theta(t)/\theta$ is convex and pointwise limits of convex functions are convex.
\end{proof}

\begin{remark}
It is not in general true that $|\grad v|$ is concave on $F$.
Here is a counter-example on an unbounded $V$.

In $d = 2$, consider the top half-space $V := \{x_2 > 0\}$ and the potential between a point charge at $(0, 1)$ and the plane $\{x_2 = 0\} = \partial V$:
\begin{align*}
v(x) = -\log(x_1^2 + (x_2 - 1)^2) + \log(x_1^2 + (x_2 + 1)^2),
\end{align*}
and set $U = \{v > 1\}$, a convex set.
We have
\begin{align*}
|\grad v(x)| = \frac{\partial v}{\partial x_2}(x_1, 0) = \frac 4{x_1^2 + 1}, \qquad x \in \partial V,
\end{align*}
which is not convex.
\end{remark}

\section{Comparison principle}

In this section we establish the proof of comparison between a weak subsolution and a supersolution when at least one has convex support. When the supersolution has convex support, we are able to directly show comparison with only semicontinuous $Q$. However, if only the weak subsolution has convex support we need to additionally assume that $Q$ is continuous. To finally use this result to establish uniqueness of function that is both a supersolution and a weak subsolution with convex support when $Q$ is upper semicontinuous, we monotonically approximate $Q$ by continuous functions.

We recall the notions of sup- and inf-convolutions that we will use to regularize free boundaries of the solutions. For a set $V$ we define
\begin{align}
\label{sub-inf-conv}
V^r := V + B_r, \qquad V_r := \{x : B_r(x) \subset V\}.
\end{align}
For a continuous $u$, we then define $u^r$ as the solution of \eqref{dirichlet} with $K^r$ and $\{u > 0\}^r$ in place of $\overline U$ and $V$. Similarly replacing $\overline U$ by $K_r$ and $V$ by $\{u > 0\}_r$ leads to $u_r$.

If $u$ is a weak subsolution so is $u^r$, and furthermore $\{u^r > 0\}$ is uniformly inner regular with radius $r$. Similarly, if $u$ is supersolution so is $u_r$ and $\{u_r > 0\}$ is uniformly outer regular. 

Let us recall a stability property of supersolutions.
\begin{lemma}\label{l.stability}
Let $Q$ be a upper semicontinuous function. Then supersolutions in the sense of \dref{supersol} are stable under uniform convergence.
\end{lemma}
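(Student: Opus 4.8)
The plan is to verify, for a uniform limit $u=\lim_k u_k$ of supersolutions $u_k$, each defining property in \dref{supersol}. Nonnegativity, continuity, and $u\ge1$ on $K$ pass to the limit at once. For harmonicity of $u$ in $\{u>0\}\setminus K$: around any such point take a closed ball $\overline{B}\subset\{u>0\}\setminus K$ with $u\ge\delta>0$ on it; then $u_k\ge\delta/2>0$ on $B$ and $B\cap K=\emptyset$ for $k$ large, so $u_k$ is harmonic on $B$, and a locally uniform limit of harmonic functions is harmonic. Compact support of $u$ is the one point needing external input: it holds whenever the supports of the $u_k$ are uniformly bounded, which is the case in all the applications in this paper (the supersolutions in play lie in a fixed ball, e.g. by comparison with a large harmonic supersolution, since $Q$ is bounded below). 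So the real content is the free-boundary inequality, for which I would use the standard viscosity stability argument, with upper semicontinuity of $Q$ entering exactly once, at the end.

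Concretely, let $\varphi\in C^\infty(U)$ with $\Delta\varphi>0$ and $\nabla\varphi\neq0$ in $U$ touch $u$ from below at $x_0\in\partial\{u>0\}\cap U$; the goal is $|\nabla\varphi(x_0)|\le Q\big(\nabla\varphi(x_0)/|\nabla\varphi(x_0)|\big)$. Since $\varphi-u\le0$ with equality at $x_0$, replacing $\varphi$ by $\varphi_\delta(x):=\varphi(x)-\delta|x-x_0|^2$ for a small $\delta>0$ makes the contact strict while preserving $\Delta\varphi_\delta>0$, $\nabla\varphi_\delta\neq0$ near $x_0$, and $\nabla\varphi_\delta(x_0)=\nabla\varphi(x_0)$. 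On a small closed ball $\overline{B_\rho(x_0)}\subset U$ with $\varphi_\delta\le u$, let $x_k$ maximize $\varphi_\delta-u_k$ over it and $m_k:=\max(\varphi_\delta-u_k)$; uniform convergence gives $m_k\to\max(\varphi_\delta-u)=0$ and $x_k\to x_0$ (the unique maximizer of $\varphi_\delta-u$), so for $k$ large $\psi_k:=\varphi_\delta-m_k$ touches $u_k$ from below at the interior point $x_k$, with $\Delta\psi_k=\Delta\varphi_\delta>0$ and $\nabla\psi_k=\nabla\varphi_\delta\neq0$.

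The crux is that $x_k\in\partial\{u_k>0\}$, so that \dref{supersol} applies to $u_k$ at $x_k$. If $u_k(x_k)>0$ then $u_k$ is harmonic near $x_k$ (as $x_k\notin K$ for $k$ large), and $\psi_k$ touching it from below at an interior point would force $\Delta\psi_k(x_k)\le\Delta u_k(x_k)=0$, contradicting $\Delta\psi_k>0$; hence $u_k(x_k)=0$. Moreover $\psi_k(x_k)=u_k(x_k)=0$ with $\nabla\psi_k(x_k)\neq0$, so $u_k\ge\psi_k>0$ arbitrarily near $x_k$ on one side of the zero set of $\psi_k$; thus $x_k\in\overline{\{u_k>0\}}$, and therefore $x_k\in\partial\{u_k>0\}$. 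Then \dref{supersol} for $u_k$ gives $|\nabla\varphi_\delta(x_k)|\le Q\big(\nabla\varphi_\delta(x_k)/|\nabla\varphi_\delta(x_k)|\big)$, and letting $k\to\infty$ --- so that $|\nabla\varphi_\delta(x_k)|\to|\nabla\varphi(x_0)|$ and the unit vectors converge to $\nabla\varphi(x_0)/|\nabla\varphi(x_0)|$ --- upper semicontinuity of $Q$ yields $|\nabla\varphi(x_0)|\le Q\big(\nabla\varphi(x_0)/|\nabla\varphi(x_0)|\big)$. The delicate steps I anticipate are this identification of $x_k$ as a genuine free-boundary point of $u_k$ (where the strict subharmonicity $\Delta\varphi>0$ is indispensable) and the closing limit, the sole place upper semicontinuity --- rather than continuity --- of $Q$ is used; the only external ingredient is the uniform support bound needed for compact support of the limit.
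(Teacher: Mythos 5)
Your proof is correct and follows essentially the same route as the paper's: pass harmonicity and the boundary data to the limit, perturb the test function to get a strict contact point, locate nearby contact points $x_k$ for $u_k$, show via $\Delta\varphi>0$ and the maximum principle that they lie on $\partial\{u_k>0\}$, and close with the upper semicontinuity of $Q$. You simply fill in more detail (the quadratic perturbation, the verification that $x_k\in\overline{\{u_k>0\}}$, and the remark about uniform support bounds) than the paper's terser write-up.
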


\begin{proof}
Let $\{u_k\}$ be a sequence of supersolutions and let $u$ be its uniform limit. Clearly $u$ is continuous, $u \geq 1$ on $K$ and $u$ is harmonic in $\{u > 0\}$.

Suppose that $\varphi \in C^\infty(U)$, $\Delta \varphi > 0$ and $\nabla \varphi \neq 0$ in $U$, touches $u$ from below at $x \in \partial \{u > 0\} \cap U$. We can assume that $u - \varphi$ has a strict minimum at $x$. By uniform convergence, all points of minimum of $u_k - \varphi$ converge to $x$ as $k \to \infty$. By maximum principle for harmonic functions the minima are located on $\partial \{u_k > 0\} \cap U$ for sufficiently large $k$. Let $x_k$ be a sequence of such minima. For large $k$ we have $\nabla \varphi (x_k) \leq Q(\nabla \varphi(x_k)/ |\nabla \varphi(x_k)|)$. By sending $k \to \infty$ and by the upper-semicontinuity of $Q$ we deduce that this also holds at $x$. Therefore $u$ is a supersolution.
\end{proof}

\subsection{Outer regular points have a classical normal derivative} First we show a useful technical lemma: the normal derivative is well defined at outer regular free boundary points, and for a supersolution it satisfies the supersolution condition.

\begin{lemma}\label{l.supersoln-blowup}
    Suppose that $u$ is a supersolution, and $x_0 \in \partial \{u>0\}$ is an outer regular free boundary point.  Then $|\grad u(x_0)|$ is well-defined and
    \[ |\grad u(x_0)| \leq Q(n_{x_0}).\]
\end{lemma}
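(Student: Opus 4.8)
The plan is to blow $u$ up at $x_0$, show the blow-up is a half-plane solution $\alpha\,(x\cdot n_{x_0})_+$, set $|\nabla u(x_0)| := \alpha$, and then read off the gradient bound by testing the supersolution condition of \dref{supersol} against an explicit polynomial touching that blow-up from below.

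First I would extract a linear growth bound from the exterior ball. Outer regularity gives a ball $B = B_\rho(y)$ with $B\cap\{u>0\}=\emptyset$ and $x_0\in\partial B$; put $n := n_{x_0} = (x_0-y)/\rho$. For small $R$ let $h$ solve the Dirichlet problem in $B_R(x_0)\setminus\overline B$ with data $u$ on $\partial B_R(x_0)$ and $0$ on $\partial B$. Since $u$ is nonnegative and harmonic where positive it is subharmonic on $\R^d$ and vanishes on $\overline B$, so $u\le h$ on $\{u>0\}\cap B_R(x_0)$; as $\partial B$ is smooth, $h\in C^1$ up to $\partial B$ near $x_0$ with $\nabla h(x_0)$ a nonnegative multiple of $n$, hence $u(x)\le C\big((x-x_0)\cdot n\big)_+ + o(|x-x_0|)$ on $\{u>0\}$ near $x_0$. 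Consequently the rescalings $u_\lambda(x):=\lambda^{-1}u(x_0+\lambda x)$ are uniformly bounded on compact sets, and, being subharmonic and harmonic on their positivity sets, converge along a subsequence to a blow-up $u_0\ge 0$ which is subharmonic, harmonic on $\{u_0>0\}$, vanishes on $\{x\cdot n\le 0\}$, and satisfies $0\le u_0\le C(x\cdot n)_+$.

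The technical heart, and the step I expect to be the main obstacle, is to show $u_0 = \alpha(x\cdot n)_+$ for a constant $\alpha\ge 0$ that is independent of the subsequence; I would then define $|\nabla u(x_0)|:=\alpha$. The growth bound $u_0\le C(x\cdot n)_+$ is essential: it forces $\{u_0>0\}$ to exhaust the half-space $\{x\cdot n>0\}$ (otherwise a positive harmonic function would vanish on too much of the boundary and grow faster than linearly), and then Schwarz reflection of $u_0$ across $\{x\cdot n=0\}$ together with the growth bound and Liouville's theorem forces $u_0$ to be linear. Subsequence-independence, and the exclusion of degenerate configurations, should follow from the regularity forced by the exterior ball together with the supersolution property of $u_0$ — which persists in the blow-up, since the conditions in \dref{supersol} are local and scale invariant and $u_0$ is a locally uniform limit, so the proof of \lref{stability} applies on compact sets.

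Granting that, $\alpha\le Q(n)$ follows quickly. If $\alpha=0$ there is nothing to prove; otherwise $\{u_0>0\}=\{x\cdot n>0\}$ and $0\in\partial\{u_0>0\}$, and, writing $n=e_1$, for each $\epsilon\in(0,\alpha)$ and constants $D>(d-1)C>0$ I would touch $u_0$ from below at $0$ with
\[
\varphi(x):=(\alpha-\epsilon)\,x_1 + D x_1^2 - C|x'|^2,\qquad x=(x_1,x')\in\R\times\R^{d-1}.
\]
Then $\Delta\varphi = 2D-2(d-1)C>0$, $\nabla\varphi(0)=(\alpha-\epsilon)e_1\neq 0$, and $\varphi\le\alpha(x_1)_+ = u_0$ on a small ball about $0$ (on $\{x_1=0\}$ one has $\varphi=-C|x'|^2\le 0$; on $\{x_1>0\}$ with $|x|<\epsilon/D$, $Dx_1^2-C|x'|^2<\epsilon x_1$; on $\{x_1<0\}$ with $|x|$ small, $\varphi<0$). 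The supersolution condition gives $\alpha-\epsilon=|\nabla\varphi(0)|\le Q(e_1)=Q(n)$, and letting $\epsilon\to 0$ yields $|\nabla u(x_0)|=\alpha\le Q(n_{x_0})$. One could avoid the blow-up-as-supersolution step by instead sliding a ball inside $\{u>0\}$ up to $\partial\{u>0\}$ near $x_0$, building a barrier touching $u$ from below there, and passing to the limit using the upper semicontinuity of $Q$; the blow-up route seems cleaner.
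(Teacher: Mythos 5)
Your overall architecture matches the paper's: blow up at the outer regular point, identify the limit as $\alpha\,(x\cdot n_{x_0})_+$, transfer the supersolution property to the blow-up by stability, and then test to get $\alpha \le Q(n_{x_0})$. The final step (your explicit quadratic test function) is correct and is exactly what the paper leaves implicit after invoking \lref{stability}. The problem is the step you yourself flag as ``the technical heart'': you do not actually prove it, and the two claims you gesture at are precisely where the work lies. First, subsequence-independence of $\alpha$: Schwarz reflection plus Liouville classifies \emph{each} subsequential limit as $\alpha\,(x\cdot n)_+$ for \emph{some} $\alpha$, but gives no reason different subsequences produce the same $\alpha$. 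Since ``$|\grad u(x_0)|$ is well-defined'' is part of the statement being proved, this cannot be waved through with ``should follow from the regularity forced by the exterior ball together with the supersolution property''; the supersolution property does not obviously pin down $\alpha$. Second, your argument that $\{u_0>0\}$ exhausts the half-space (``a positive harmonic function would vanish on too much of the boundary and grow faster than linearly'') implicitly treats the positivity set of the limit as a cone; a subsequential blow-up limit need not be homogeneous, so the superlinear-growth dichotomy for harmonic functions in cones does not directly apply.

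The paper closes exactly this gap by citing \cite[Lemma 11.18]{CaffarelliSalsa}, which gives the asymptotic development $u(x)=\alpha\,((x-x_0)\cdot n)_+ + o(|x-x_0|)$ (equivalently, non-tangential convergence of $u(x_0+rx)/r$ to $\alpha\, x\cdot n$ with a single, well-defined $\alpha$) at any free boundary point admitting a tangent ball from the zero side; the full locally uniform convergence to $\alpha\,(x\cdot n)_+$ then follows by combining this with the Lipschitz bound and the exterior ball, as in the paper's proof. To make your write-up complete you should either cite that lemma (or an equivalent boundary Harnack / Herglotz-representation argument near the tangent ball) or supply a genuine proof of uniqueness of the blow-up; as written, the central assertion of the lemma is assumed rather than proved. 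Your barrier construction for the linear growth bound and your closing test-function computation are fine and can be kept.
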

\begin{proof}
    Without loss assume that $x_0 = 0$ and outward normal determined by the exterior ball is $-e_d$. By \cite[Lemma 11.18]{CaffarelliSalsa} the blow-up sequence
    \[ u_r(x) = \frac{u(rx)}{r} \to \alpha x_d \ \hbox{ in non-tangential cones for some } \ \alpha \in [0,\|\grad u\|_\infty].\]
    On the other hand the blow-up sequence $u_r(x)$ is uniformly Lipschitz continuous with $u_r(0) = 0$ and so every subsequence has a subsequence converging uniformly on $\R^n$.  Every subsequential limit must be zero on $x_n \leq 0$ by the exterior ball condition, and must agree with $\alpha x_n$ on $x_n > 0$ by the non-tangential limit.  Thus the sequence actually converges locally uniformly to $\alpha (x_d)_+$.  
    By \lref{stability} for upper semicontinuous $Q$, the limit $\alpha (x_n)_+$ is also a viscosity supersolution and so $\alpha \leq Q(e_d)$.
\end{proof}

\subsection{Comparison principle: convex supersolution}
\begin{lemma}\label{l.comparison-convex-above}
Assume $Q$ is upper semicontinuous. Suppose that $v$ is supersolution of \eref{minsup} with $V:=\{v>0\}$ convex, and $u$ is a weak subsolution of \eref{minsup}.  Then $v \geq u$.
\end{lemma}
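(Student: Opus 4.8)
The plan is a sliding argument: shrink the subsolution $u$ toward a point of $K$, which strictly increases its gradient on the free boundary, and compare with $v$; a first point of contact would have to satisfy simultaneously the supersolution condition for $v$ (an upper bound on $|\grad v|$) and the weak subsolution condition for $u$ (a lower bound, amplified by the dilation factor), which is impossible. After noting that $u\le 1$ everywhere by the maximum principle, I would first regularize: replacing $u$ by a sup‑convolution $u^r$ (a weak subsolution with $u^r\ge u$ and uniformly inner regular positivity set) it suffices to prove $v\ge u^r$ for all small $r>0$, so I may assume $\{u>0\}$ is uniformly inner regular. Fix $z_0\in\operatorname{int}K$ and put $u_\lambda(x):=u(z_0+\lambda(x-z_0))$ for $\lambda\ge 1$. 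Then $u_\lambda$ is again a weak subsolution with uniformly inner regular positivity set: it is harmonic where positive away from the smaller obstacle $z_0+\lambda^{-1}(K-z_0)\subseteq K$, is $\le 1\le v$ on $K$, and on the free boundary $|\grad u_\lambda|=\lambda\,|\grad u(\cdot)|$ with the same inward normal. For $\lambda$ large $\{u_\lambda>0\}\subseteq\operatorname{int}K\subseteq V$, hence $u_\lambda\le 1\le v$; set $\lambda^*:=\inf\{\lambda\ge1:\ u_\mu\le v\ \text{for all}\ \mu\ge\lambda\}$ and aim to show $\lambda^*=1$.

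Suppose $\lambda^*>1$. By continuity $u_{\lambda^*}\le v$, so $\{u_{\lambda^*}>0\}\subseteq V$ and there is a contact point; interior contact on $\{u_{\lambda^*}>0\}\cap\{v>0\}$ propagates, via the strong maximum principle, to a contact on the common free boundary, so I may take $x_0\in\partial\{u_{\lambda^*}>0\}\cap\partial V$ with $u_{\lambda^*}(x_0)=v(x_0)=0$. Since $V$ is convex it has a supporting hyperplane with inner normal $n$ at $x_0$, so $\{u_{\lambda^*}>0\}\subseteq V\subseteq\{(x-x_0)\cdot n\ge0\}$, and $x_0$ is outer regular for $V$; together with the interior ball at $x_0$ coming from inner regularity of $\{u_{\lambda^*}>0\}$ this controls the geometry of $\partial V$ near $x_0$. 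By \lref{supersoln-blowup} (applied along $\partial V$ near $x_0$) and the upper semicontinuity of $Q$ one gets $|\grad v|\le Q(n)+\ep$ on $\partial V$ near $x_0$, and then, from boundary regularity of $v$, the key estimate $u_{\lambda^*}(x)\le v(x)\le (Q(n)+\ep)\,(x-x_0)\cdot n$ for $x\in\overline{\{u_{\lambda^*}>0\}}$ near $x_0$, for arbitrary $\ep>0$ after shrinking.

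Now pick a strictly concave $f\in C^\infty(\R)$ with $f(0)=0$, $f'(0)=Q(n)+2\ep$ and $f(s)\ge(Q(n)+\ep)s$ on $[0,\delta]$, so that $\varphi(x):=f((x-x_0)\cdot n)$ satisfies $\Delta\varphi<0$, is one‑dimensional with direction $n$, and touches $u_{\lambda^*}$ from above at $x_0$ in $\overline{\{u_{\lambda^*}>0\}}$ on a suitable neighborhood $U$ with $u_{\lambda^*}<\varphi$ on $\overline{\{u_{\lambda^*}>0\}}\cap\partial U$. Pulling $\varphi$ back by the dilation, $\psi(y):=\varphi(z_0+\lambda^{*-1}(y-z_0))$ is a strictly concave one‑dimensional function (direction $n$) touching $u$ from above at $\tilde x_0:=z_0+\lambda^*(x_0-z_0)\in\partial\{u>0\}$, with $|\grad\psi(\tilde x_0)|=f'(0)/\lambda^*=(Q(n)+2\ep)/\lambda^*$. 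The weak subsolution property of $u$ then forces $(Q(n)+2\ep)/\lambda^*\ge Q(n)$, i.e.\ $2\ep\ge(\lambda^*-1)Q(n)$; choosing $\ep<(\lambda^*-1)Q(n)/2$ is a contradiction. Hence $\lambda^*=1$ and $v\ge u$; undoing the regularization gives the lemma.

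I expect the main obstacle to be the degenerate geometry in the last step when $x_0$ lies on a facet $F$ of $V$: then $u_{\lambda^*}$ and any one‑dimensional $\varphi$ vanish together along the flat set $F\cap\overline{\{u_{\lambda^*}>0\}}$, so no one‑dimensional test function can be strictly above $u_{\lambda^*}$ on the boundary of a small ball, and the construction above must be adjusted. In that regime I would instead use that a weak subsolution satisfies $\max_F|\grad u|\ge Q(n)$ on a facet of its free boundary (this is the only information the weak condition yields there): applied to $u$ on the facet corresponding to $F$ and rescaled by $\lambda^*$, it produces a relative interior point $x_1$ of $F$ with $|\grad u_{\lambda^*}(x_1)|>Q(n)$, while the supersolution condition for $v$ and comparison of one‑sided normal derivatives along the flat piece give $|\grad u_{\lambda^*}(x_1)|\le|\grad v(x_1)|\le Q(n)$, a contradiction (again using $\lambda^*>1$ crucially). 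Making the transition between these regimes — and the intermediate ones — precise, together with the routine regularity points (exclusion of interior contact, well‑definedness of the normal derivatives via \lref{supersoln-blowup}, boundary regularity of $v$ up to $\partial V$, and the convexity input \lref{grad-convex} controlling $|\grad v|$ on facets), is the technical heart of the argument.
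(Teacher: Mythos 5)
Your overall architecture (regularize $u$ by sup-convolution, introduce strictness by a dilation, locate a contact point on the common free boundary, and play the supersolution bound on $|\grad v|$ against the weak subsolution condition for $u$) is the same as the paper's, and your preliminary steps are sound. But the step you yourself flag as unresolved --- the degenerate contact geometry when the contact set is not a single smooth point --- is not a peripheral technicality; it is the actual content of the lemma, and your patch for it has a genuine gap. Your primary route (a strictly concave one-dimensional $\varphi$ touching $u_{\lambda^*}$ at a single point $x_0$ with strict ordering on the boundary of a small ball) fails whenever $\Gamma:=\partial\{u_{\lambda^*}>0\}\cap P_0$ is not a singleton, and your fallback asserts a pointwise bound $|\grad u_{\lambda^*}(x_1)|>Q(n)$ at some point of the flat piece. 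The weak subsolution condition of \dref{weaksub} is only a statement about test functions; converting it into a classical gradient of $u_{\lambda^*}$ at a specific boundary point requires two-sided regularity of $\partial\{u_{\lambda^*}>0\}$ at that point and an attainment argument, neither of which you supply, and the ``intermediate regimes'' (contact on lower-dimensional faces, corners of $\partial V$, points where nearby supporting normals of $V$ are far from $n$ so that upper semicontinuity of $Q$ does not give $|\grad v|\le Q(n)+\ep$ nearby) are left open. Note also that you regularize only $u$; without an inf-convolution of $v$ the points of $\Gamma$ need not be two-sided regular points of $\partial V$ with a common normal, which is what makes \lref{supersoln-blowup} usable uniformly on $\Gamma$.

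The paper closes exactly this gap with one uniform device that makes the case distinction unnecessary: the neighborhood $U$ in \dref{weaksub} is arbitrary, so one tests with a \emph{linear} one-dimensional function on a neighborhood $\Gamma+B_r$ of the \emph{entire} compact contact set $\Gamma=\partial U\cap P_0$, requiring strict ordering only on $\partial(\Gamma+B_r)$ rather than domination near a single point. Concretely: after replacing $u$ by $u^r$ and $v$ by $v_r(a^{-1}\cdot)$ with $a>1$, every point of $\Gamma$ carries both an interior ball (from $U^r\subset V$) and an exterior ball (from $V_r$) tangent to the supporting hyperplane $P_0$, hence has the common normal $n_0$; \lref{supersoln-blowup} plus strictness give $|\grad v|\le Q(n_0)-\delta$ on all of $\Gamma$; and the quantitative boundary regularity estimate \cite[Lemma~3.14]{FeldmanSmart} upgrades this pointwise normal-derivative bound to the barrier $v\le(Q(n_0)-\delta+Cr^{1/3})((x-x_0)\cdot n_0)_+$ on $\Gamma+B_r$. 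Choosing $r=c\delta^3$, the linear function with slope $Q(n_0)-\tfrac12\delta$ then touches $u\le v$ from above on $\Gamma$ with strict ordering on $\partial(\Gamma+B_r)$, contradicting \dref{weaksub}. Your sketch is missing precisely these three ingredients --- testing on a neighborhood of all of $\Gamma$, the two-sided regularization of $v$ that gives a single normal $n_0$ on $\Gamma$, and the quantitative estimate converting the gradient bound on $\Gamma$ into a linear barrier on $\Gamma+B_r$ --- and without them the argument does not go through in the facet and corner cases.
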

\begin{proof}
By translation we can assume that the interior of $K$ contains the origin.
Since $v$ and $u$ are harmonic in their positive sets, it is sufficient to show that $ \{u>0\} =: U \subset V := \{v>0\}$.

Let us suppose that this is not the case. Recall the sup and inf-convolutions \eqref{sub-inf-conv}. There exist $r > 0$ sufficiently small and $a > 1$ such that $U^r \subset a V_r$, $\partial U^r \cap a \partial V_r \neq \emptyset$ and $K^r \subset aK_r$ by convexity of $K$. Set $\tilde v(x) := v_r(a^{-1}x)$. By maximum principle for harmonic functions, $u^r \leq \tilde v$. Moreover $\tilde v$ is a strict supersolution.

Let us refer to $u^r$, $\tilde v$, $U^r$ and $a V_r$ as $u$, $v$, $U$, $V$, respectively, in the following.
    Let $x_0\in \partial U \cap \partial V$. Since $U$ is inner regular and $V$ outer regular, there is a unique supporting normal $n_0$ at $x_0$. 
    Let $P_0$ be the supporting hyperplane to $V$ at $x_0$.  Then $\Gamma := \partial \{u>0\} \cap P_0$ is compact and all points of $\Gamma$ are uniformly inner and outer regular points of $\partial V$ and, by \lref{supersoln-blowup},
\[|\grad v(x)| \leq Q(n_{0})-\delta \ \hbox{ for all } \ x \in \Gamma,\]
for some $\delta>0$ by the strict supersolution property.
By \cite[Lemma 3.14]{FeldmanSmart} 
\[v(x) \leq (Q(n_0)-\delta+Cr^{1/3})((x-x_0) \cdot n_0)_+ \ \hbox{ on } \ \Gamma + B_r(0)\]
so taking $r = c\delta^3$ we find that
\[\varphi(x) = (Q(n_0)-\tfrac{1}{2}\delta)((x-x_0) \cdot n_0)_+\]
touches $u$ from above on $\Gamma$ with strict ordering on $\partial (\Gamma + B_r(0))$.  This contradicts the weak subsolution property of $u$.
\end{proof}
  \subsection{Comparison principle: convex subsolution}
Next we consider the case when a supersolution touches a convex weak subsolution from above.  In \cite[Lemmas 3.15 and 3.16]{FeldmanSmart} this case relied on a geometric argument using convexity allowing to compare the gradient of the touching supersolution at a sequence of points with normal $\nu_n$ approaching the touching direction $\nu_0$ to the gradient of the subsolution on its $\nu_n$ facet.  In place of this argument we are able to take advantage of well-known results from convex analysis in the case of continuous $Q$.  The idea is that the weak subsolution condition is not weak at exposed points, and we can exploit the quasi-convexity of the gradient along with Straszewicz's Theorem, \tref{Straszewicz}, to derive a strong subsolution condition on exposed faces.
  \begin{lemma}
\label{l.subsol-grad-bound}
Suppose that $v$ is a weak subsolution of \eqref{e.minsup} with continuous $Q$, harmonic in $\{v>0\} \setminus K$. Further assume that $V = \{v>0\}$ is convex and uniformly inner regular, and $n$ is some normal direction with facet $V_n = \{x \in \overline{V}: n \cdot x = \inf_{x \in V} x \cdot n\}$.   Then
\[ \min_{V_n} |\grad v| \geq Q(n).\]
\end{lemma}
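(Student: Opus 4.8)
The plan is to reduce the desired strong condition $\min_{V_n}|\grad v|\ge Q(n)$ to the weak subsolution condition by approximating the facet point where the minimum is attained by exposed points, where the weak condition is actually a pointwise condition. First I would fix $x_*\in V_n$ realizing $\min_{V_n}|\grad v|$; the claim is really that $|\grad v(x_*)|\ge Q(n)$ (note $|\grad v|$ is a classical quantity here since $V$ is uniformly inner regular, so by the Hopf lemma the gradient is well-defined and positive on $\partial V$, and in the harmonic region near the facet it is smooth). By \lref{grad-convex} the superlevel sets $\{|\grad v|>\tau\}\cap V_n$ are convex for every $\tau$, so $1/|\grad v|$ is a convex function on the convex set $V_n$; in particular its maximum over $V_n$ is attained at an extreme point of $V_n$. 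Equivalently, $\min_{V_n}|\grad v|$ is attained at an extreme point $x_*$ of $V_n$, and since every extreme point of the exposed face $V_n$ is an extreme point of $V$ itself, $x_*$ is an extreme point of $V$.

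Now I would invoke Straszewicz's Theorem, \tref{Straszewicz}: the extreme point $x_*$ of $\overline V$ is a limit of a sequence of exposed points $x_k$ of $\overline V$, with supporting linear functionals $n_k$ (the normals exposing $x_k$), and after passing to a subsequence $n_k\to n_\infty$ for some unit vector $n_\infty$; since $x_k\to x_*\in V_n$ and $x_*$ lies on the supporting hyperplane with normal $n$, a standard argument shows $n_\infty = n$ (the exposing hyperplanes converge to a supporting hyperplane at $x_*$, and because $x_*$ is already on the facet hyperplane $\{n\cdot x = \inf_V n\cdot x\}$ while $V$ is uniformly inner regular so that supporting hyperplanes at $x_*$ through the smooth harmonic region are... actually this needs care — see the obstacle below). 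At each exposed point $x_k$, the facet $V_{n_k}=\{x_k\}$ is a singleton, so the weak subsolution condition applied with the one-dimensional test function $\varphi_k(x) = (|\grad v(x_k)|+\epsilon)((x-x_k)\cdot n_k)_+$ — which for small $\epsilon$ touches $v$ from above near $x_k$ with strict ordering on a small boundary sphere — actually forces $|\grad v(x_k)|\ge Q(n_k)$. (Here one uses that near an exposed point of a uniformly inner regular convex set, $v$ is asymptotically linear in the normal direction with slope exactly $|\grad v(x_k)|$, via a blow-up / barrier argument analogous to \lref{supersoln-blowup}.)

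Finally I would pass to the limit: $|\grad v|$ is continuous up to $\partial V$ in the relevant region, so $|\grad v(x_k)|\to|\grad v(x_*)|$, and since $Q$ is continuous and $n_k\to n$, we get $|\grad v(x_*)|\ge Q(n)$, which is the claim. The main obstacle I anticipate is the geometric step identifying $\lim n_k = n$ and, more delicately, controlling the exposed-point normals $n_k$ relative to the facet normal $n$: a priori the exposing hyperplanes at nearby exposed points could tilt, and one must use the uniform inner regularity of $V$ (which gives a two-sided control: an interior ball of fixed radius touching at each boundary point) together with the fact that $x_*$ is an extreme point lying on the facet hyperplane to pin down the limiting normal. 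A secondary technical point is justifying that at an exposed point the blow-up of $v$ is the exact linear profile with slope $|\grad v(x_k)|$ so that the one-dimensional test function genuinely realizes the weak subsolution test with the correct direction $n_k$; this is where continuity of $Q$ (as opposed to mere upper semicontinuity) is essential, since the direction $n_k$ only converges to $n$ rather than equalling it.
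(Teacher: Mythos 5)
Your proposal follows essentially the same route as the paper's proof: reduce to extreme points of the facet using the convexity of the superlevel sets of $|\grad v|$ from \lref{grad-convex} (the paper phrases this as the convex superlevel set $\{|\grad v|\geq Q(n)\}$ containing the convex hull of the extreme points, you as a Bauer-type maximum principle, which is the same idea), then approximate extreme points by exposed points via \tref{Straszewicz}, apply the weak subsolution condition at exposed points where the facet is a singleton, and pass to the limit using continuity of $Q$. The main obstacle you anticipate is not actually an issue: uniform inner regularity together with convexity makes $\partial V$ a $C^{1,1}$ hypersurface with a unique, continuously varying normal, so the normal at $x_*$ must equal $n$ (since the facet hyperplane supports $V$ there) and the exposed-point normals $n_k\to n$ automatically, which is exactly how the paper dispatches this step.
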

    Note that the weak subsolution property just says $\max_{V_n}|\grad v| \geq Q(n)$.  Actually this allows us to conclude, in the setting of the Lemma, that $V_n$ must be trivial (a single point).  If we did not assume that $Q(n)$ was continuous we would only obtain
 \[ \min_{V_n} |\grad v| \geq \liminf_{n' \to n}Q(n').\]

 \begin{proof}

Since $V_n$ is the convex hull of its extreme points by \lref{extremehull} and the superlevel set $\{x \in V_n : |\grad v|(x) \geq Q(n)\}$ is convex by \lref{grad-convex}, it is enough to establish the inequality at the extreme points of $V_n$.

Let $x_0$ be an extreme point of $V_n$. By inner regularity, $n$ is the inner unit normal of $\partial V$ at $x_0$. Clearly $x_0$ is also an extreme point of $V$. 
By \tref{Straszewicz} there is a sequence $x_j \to x_0$ of exposed points of $V$. 
At exposed points of $V$, we have by the subsolution condition directly
\begin{align*}
|\grad v|(x_j) \geq Q(n_{x_j}).
\end{align*}
Since $\partial V$ is uniformly inner and outer regular 
\[ |\grad v|(x_0) = \lim_{n \to \infty} |\grad v|(x_j) \geq \lim_{j \to \infty}Q(n_{x_j}) = Q(n),\]
where we used the standard gradient regularity in $C^{1,1}$ domains, see \cite[Lemma 3.13]{FeldmanSmart} for proof.

  \end{proof}

Now applying \lref{subsol-grad-bound} along with a typical dilation argument to create a touching point we will get a comparison principle.
  \begin{lemma}\label{l.comparison-convex-below}
Assume $Q$ is continuous. Suppose that $u$ is supersolution of \eref{minsup}, and $v$ is a weak subsolution of \eref{minsup} with $\{v>0\}$ convex.  Then $v \leq u$.
\end{lemma}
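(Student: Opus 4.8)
The plan is to run the same dilation argument as in the proof of \lref{comparison-convex-above}, but with the roles of the super- and sub-solution interchanged, so that at the touching point produced by the dilation we may invoke the \emph{strong} subsolution inequality of \lref{subsol-grad-bound} rather than only the weak one. As there, it suffices to prove the inclusion $V := \{v > 0\} \subseteq U := \{u > 0\}$: if $v \equiv 0$ there is nothing to prove, and otherwise, once $V \subseteq U$ is known, the maximum principle in $V \setminus K$ (where both $v$ and $u$ are harmonic, $v = 0 \le u$ on $\partial V$, and $v \le 1 \le u$ on $\partial K \cap \overline V$, using $K \subseteq U$) together with $v \le 1 \le u$ on $K$ gives $v \le u$ everywhere.

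Suppose for contradiction that $V \not\subseteq U$, and translate so that $0 \in \operatorname{int} K$. Replace $v$ by its sup-convolution $v^r$, a weak subsolution with $V^r := \{v^r > 0\} = V + B_r$ convex and uniformly inner regular of radius $r$, and $u$ by its inf-convolution $u_r$, a supersolution with $\{u_r > 0\} = U_r$ uniformly outer regular. For $r$ small one still has $V^r \not\subseteq \overline{U_r}$ (any $x^*$ with $v(x^*) > 0$, $u(x^*) = 0$ lies in $V^r$ but has $\dist(x^*, U^c) = 0 < r$, hence $x^* \notin \overline{U_r}$). Since $\overline{U_r}$ is compact and, for $a$ large, $V^r/a$ lies in a small ball about $0$ contained in $U_r$, the set $\{a \ge 1 : V^r \subseteq a \overline{U_r}\}$ is nonempty and closed; let $a$ be its minimum. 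Then $a > 1$ (because $V^r \not\subseteq \overline{U_r}$), and, taking $r$ small relative to $a - 1$, the convexity of $K$ gives $K^r \subseteq a K_r$. Put $\tilde u(x) := u_r(x/a)$, so that $\{\tilde u > 0\} = a U_r$ is uniformly outer regular and $\tilde u \equiv 1$ on $a K_r \supseteq K^r$. Applying the minimum principle to $\tilde u - v^r$ on $V^r \setminus \overline{K^r}$ --- where $v^r$ is harmonic, $\tilde u$ is superharmonic, $\tilde u - v^r \ge 0$ on $\partial V^r$ (since $v^r = 0$ there) and on $\partial K^r$ (since $\tilde u = 1 = v^r$ there) --- yields $v^r \le \tilde u$ everywhere, and by minimality of $a$ there is a point of $\overline{V^r} \cap \partial(a U_r)$, which must lie in $\partial V^r$ since $v^r \le \tilde u$; call it $x_0$. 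Note $x_0$ is not on either obstacle, since $v^r(x_0) = \tilde u(x_0) = 0$.

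The contradiction then comes from comparing normal derivatives at $x_0$. Since $V^r$ is uniformly inner regular and is contained in the uniformly outer regular set $a U_r$ with $x_0$ on the common boundary, both $V^r$ and $a U_r$ have an interior and an exterior ball at $x_0$; hence $\partial V^r$ has a unique supporting inner normal $n_0$ at $x_0$, the point $x_0$ lies on the facet $V^r_{n_0}$, and $v^r$, $\tilde u$ both have classical normal gradients at $x_0$. By \lref{subsol-grad-bound}, applicable because $Q$ is continuous and $V^r$ is convex and uniformly inner regular,
\[ |\grad v^r(x_0)| \ge \min_{V^r_{n_0}} |\grad v^r| \ge Q(n_0). \]
On the other hand $x_0 / a$ is an outer regular free boundary point of $\partial U_r$ with inner normal $n_0$, so \lref{supersoln-blowup} gives $|\grad u_r(x_0/a)| \le Q(n_0)$, whence $|\grad \tilde u(x_0)| = a^{-1} |\grad u_r(x_0/a)| \le a^{-1} Q(n_0)$. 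Finally, $v^r \le \tilde u$ with $v^r(x_0) = \tilde u(x_0) = 0$ and gradients along $n_0$ forces $|\grad v^r(x_0)| \le |\grad \tilde u(x_0)|$ (compare the two functions on the inward ray $x_0 + t n_0$ and let $t \to 0^+$). Combining,
\[ Q(n_0) \le |\grad v^r(x_0)| \le |\grad \tilde u(x_0)| \le a^{-1} Q(n_0) < Q(n_0), \]
since $a > 1$ and $Q > 0$; this is a contradiction. Hence $V \subseteq U$ and the proof is complete.

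The main obstacle is really just the bookkeeping around the convolutions and the dilation: making precise that after regularizing $V^r$ still escapes $U_r$, that the dilation factor $a$ is genuinely $> 1$ (which is what makes $\tilde u$ a \emph{strict} supersolution and hence produces the strict inequality above), and that the obstacle inclusion $K^r \subseteq a K_r$ survives --- all of this parallels the argument in \lref{comparison-convex-above}. The one genuinely new ingredient is \lref{subsol-grad-bound}, which upgrades the \emph{a priori} bound $\max_{V^r_{n_0}} |\grad v^r| \ge Q(n_0)$ coming from the weak subsolution condition to the pointwise bound at $x_0$ used above; this is exactly where continuity of $Q$ is needed, and is why the upper semicontinuous case must afterwards be handled separately by monotone approximation.
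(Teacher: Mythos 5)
Your proof is correct and follows essentially the same route as the paper: regularize by sup/inf-convolution, dilate the supersolution by the minimal factor $a>1$ to create a touching point, apply \lref{subsol-grad-bound} there to get the strong lower bound $|\grad v^r(x_0)|\geq Q(n_0)$, and contradict the strict supersolution property of $\tilde u$. The only cosmetic difference is that you make the final step explicit via \lref{supersoln-blowup} and a one-sided comparison of normal derivatives, where the paper simply invokes that $\tilde u$ cannot be touched from below with slope exceeding $Q(n_0)/a$.
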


\begin{proof}
By translation we can assume that the interior of $K$ contains the origin.
Since $v$ and $u$ are harmonic in their positive sets, it is sufficient to show that $V := \{v>0\} \subset \{u>0\} =: U$.

Let us suppose that this is not the case. Recall the sup and inf-convolutions \eqref{sub-inf-conv}. There exist $r > 0$ sufficiently small and $a > 1$ such that $V^r \subset a U_r$, $\partial V^r \cap a \partial U_r \neq \emptyset$ and $K^r \subset aK_r$ by convexity of $K$. Set $\tilde u(x) := u_r(a^{-1}x)$. By maximum principle for harmonic functions, $v^r \leq \tilde u$.

At any point $x \in \partial V^r \cap a \partial U_r$ we have $|\nabla v^r| \geq Q(n)$ by \lref{subsol-grad-bound}. However, this is a contradiction with the fact that $\tilde u$ cannot be touched from below by a test function with a slope larger than $Q(n)/ a$ by supersolution property.
\end{proof}

\subsection{Existence and uniqueness of a quasi-convex solution}
Now we can use the comparison principle, in the case of continuous $Q$, to show uniqueness and quasi-convexity of solutions.
\begin{theorem}[{cf. \cite[Th.~3.10]{FeldmanSmart}}]\label{t.minconvex}
Assume that $K$ is nonempty convex compact set with inner regular boundary and assume that $Q$ is continuous. Then there exists a unique continuous function $u$ with compact support that is both a supersolution and a weak subsolution. Moreover $\{u>0\}$ is convex. 
\end{theorem}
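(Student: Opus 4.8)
The plan is to use the two comparison lemmas already established to reduce the theorem to the construction of a \emph{single} convex-support solution, and then to produce that solution by a Perron-type argument carried out inside the class of convex-support supersolutions. Indeed, suppose for a moment that some $u$ is simultaneously a supersolution and a weak subsolution of \eref{minsup} with $\{u>0\}$ convex. If $w$ is any other function that is both a supersolution and a weak subsolution, then \lref{comparison-convex-above} applied with supersolution $u$ (convex support) and weak subsolution $w$ gives $u\ge w$, while \lref{comparison-convex-below} applied with supersolution $w$ and weak subsolution $u$ (convex support) gives $u\le w$; hence $w=u$. This simultaneously yields uniqueness and the fact that $\{w>0\}$ is convex for every such $w$. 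So it suffices to build one such $u$.

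Let $\mathcal C$ be the set of supersolutions of \eref{minsup} whose positivity set is convex. It is closed under the operation sending $(w_1,w_2)$ to the harmonic replacement of $\min(w_1,w_2)$ in $(\{w_1>0\}\cap\{w_2>0\})\setminus K$: the outcome has convex positivity set, and it is a supersolution because a smooth test function touching it from below at a free boundary point also touches $\min(w_1,w_2)$, hence one of the $w_i$, from below. It is nonempty: for $q>0$ the classical convex solution $W_q$ of the isotropic exterior Bernoulli problem over $K$ with free boundary slope $q$ exists, equals $1$ on $K$, and is compactly supported \cite{Beurling,HenrotShahgholian1997,HenrotShahgholian2002} (using that $K$ is convex and inner regular), and a one-line touching computation shows that $W_{\underline Q}\in\mathcal C$ for $\underline Q:=\min_{S^{d-1}}Q$, while $W_{\bar Q}$ with $\bar Q:=\max_{S^{d-1}}Q$ is a weak subsolution of \eref{minsup}; by \lref{comparison-convex-below} every supersolution of \eref{minsup} dominates $W_{\bar Q}$. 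Running the usual Perron construction inside the lattice $\mathcal C$, one obtains $u:=\inf_{w\in\mathcal C}w$ as a decreasing limit of a sequence $w_k\in\mathcal C$; since $u\ge W_{\bar Q}$ (so $u>0$ on a fixed neighborhood of $K$), $u\le W_{\underline Q}$ (so $u$ is compactly supported and $u=1$ on $K$), Harnack's inequality and barriers at the boundary of a convex domain show $u$ is continuous and harmonic in $\Omega:=\operatorname{int}\bigcap_k\{w_k>0\}$ with $\{u>0\}=\Omega$ convex and bounded, and Dini's theorem together with the stability \lref{stability} shows $u$ is a supersolution.

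It remains to show $u$ is a weak subsolution; this is the step requiring the most care. Suppose not: a one-dimensional $\varphi=f(x\cdot p)$ with $\Delta\varphi<0$ touches $u$ from above in $\overline{\{u>0\}}$ at some $x_0\in\partial\{u>0\}$ with strict ordering on the relevant boundary and $|\grad\varphi(x_0)|<Q(p)$. Since $f$ is concave, vanishes at $x_0\cdot p$, and $f'(x_0\cdot p)=|\grad\varphi(x_0)|>0$, one has $\{u>0\}\subseteq\{x\cdot p>x_0\cdot p\}$ near $x_0$, hence (a locally supporting hyperplane of a convex set is globally supporting) $\{u>0\}\subseteq\{x\cdot p\ge x_0\cdot p\}$ with equality attained at $x_0$; moreover $K$ lies at distance at least $\dist(K,\partial\{u>0\})>0$ from $\{x\cdot p=x_0\cdot p\}$ because $K$ is interior to $\{u>0\}$. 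Fix a small $\ep>0$, put $\Omega':=\{u>0\}\cap\{x\cdot p>x_0\cdot p+\ep\}$ (a convex set still containing $K$), let $\tilde u$ solve $\Delta\tilde u=0$ in $\Omega'\setminus K$ with $\tilde u=1$ on $\partial K$ and $\tilde u=0$ on $\partial\Omega'$, and extend $\tilde u$ by $0$ outside $\overline{\Omega'}$. Then $\tilde u\le u$ by the maximum principle, and $\tilde u$ vanishes on the nonempty set $\{u>0\}\cap\{x\cdot p<x_0\cdot p+\ep\}$ where $u>0$, so $\tilde u\lneq u$. On the part of $\partial\{\tilde u>0\}$ inherited from $\partial\{u>0\}$ the supersolution inequality for $\tilde u$ follows from $\tilde u\le u$, since any smooth test function below $\tilde u$ at such a point is also below $u$ there with the same zero value. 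On the new flat face in $\{x\cdot p=x_0\cdot p+\ep\}$ the inner normal is $p$, and since $\tilde u\le u\le\varphi$ with $f(x_0\cdot p+t)\le t\,f'(x_0\cdot p)$ by concavity, comparing $\tilde u$ with a linear profile across a strip of width $\sqrt\ep$ gives $|\grad\tilde u|\le(1+o(1))|\grad\varphi(x_0)|<Q(p)$ on that face as $\ep\to0$, which is the supersolution inequality there. Hence $\tilde u\in\mathcal C$ with $\tilde u\lneq u$, contradicting $u=\inf\mathcal C$. Thus $u$ is the desired convex-support solution and the theorem follows from the reduction in the first paragraph. I expect the genuine obstacle to be precisely this last argument: keeping the positivity set convex throughout the Perron scheme (harmless under the intersection and half-space operations used) while simultaneously verifying both the supersolution and weak subsolution conditions, in particular the barrier estimate for $|\grad\tilde u|$ on the cut; the continuity of $Q$ enters only through \lref{comparison-convex-below} and hence \lref{subsol-grad-bound}, and the inner regularity of $K$ only through the barrier theory and the hypotheses of the comparison lemmas.
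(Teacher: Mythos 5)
Your overall strategy coincides with the paper's: Perron's method over the class of supersolutions with convex positivity set, stability (\lref{stability}) for the supersolution property of the infimum, a contradiction argument producing a strictly smaller member of the class if the weak subsolution property fails, and uniqueness obtained from \lref{comparison-convex-above} and \lref{comparison-convex-below} exactly as in your first paragraph. The one place where your implementation genuinely differs is the contradiction step: the paper shifts the one-dimensional test function $\varphi$ down and takes a minimum with $u$, whereas you truncate $\{u>0\}$ by the half-space $\{x\cdot p>x_0\cdot p+\ep\}$ and harmonically replace. Either can be made to work, but as written your version has a gap.

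The gap: your gradient bound $|\grad\tilde u|\le(1+o(1))|\grad\varphi(x_0)|$ on the new flat face rests on $\tilde u\le u\le\varphi$, and the inequality $u\le\varphi$ is only available in the neighborhood $U$ from \dref{weaksub}. The new face $\{u>0\}\cap\{x\cdot p=x_0\cdot p+\ep\}$, and the slab of width $\sqrt\ep$ you use for the linear barrier, converge as $\ep\to0$ to the full facet $F:=\overline{\{u>0\}}\cap\{x\cdot p=x_0\cdot p\}$, which a priori could be a large set not contained in $U$; on its portion outside $U$ you have no control on $u$, so the comparison with the linear profile fails on the lateral boundary of your slab. The fix is to observe that $u=0=\varphi$ on all of $F$, so $F$ lies in the contact set; the strict ordering $u<\varphi$ on $\overline{\{u>0\}}\cap\partial U$ then forces $F\cap\partial U=\emptyset$, and since $F$ is convex, hence connected, and contains $x_0\in U$, we get $F\subset U$. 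By compactness the sets $\overline{\{u>0\}}\cap\{x\cdot p\le x_0\cdot p+\ep+\sqrt\ep\}$ decrease to $F$ and therefore lie in $U$ for all small $\ep$, after which your barrier argument goes through and, using the strict inequality $|\grad\varphi(x_0)|<Q(p)$, yields $|\grad\tilde u|\le Q(p)$ on the new face for $\ep$ small. With that observation added your proof is complete; note that, consistently with the paper, continuity of $Q$ is not needed in this step but only through \lref{subsol-grad-bound} and \lref{comparison-convex-below}.
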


\begin{proof}
The existence follows by Perron's method taking the infimum $u$ of supersolutions with convex support. The support of $u$ must be convex since all supersolutions are bounded from below by a harmonic function with $1$ on $K$ with support equal to the intersection of the convex support of the supersolutions.
A standard argument yields that $u$ is also a supersolution. And $u$ must be a weak subsolution for otherwise we could touch it from above by a one dimensional $C^\infty$ test function that is a strong supersolution on a neighborhood of the contact set. By shifting this test function down a smaller supersolution with convex support can be created, leading to a contradiction. Finally, uniqueness follows from the convex comparison in \lref{comparison-convex-above} and \lref{comparison-convex-below}.
\end{proof}

The comparison principle and convexity in the case of upper semicontinuous $Q$ now follows by a monotone approximation argument. 

\begin{corollary}\label{c.cts-approx}
Suppose $Q$ is upper semicontinuous and let $u$ be the minimal supersolution of \eref{minsup}. Then:
\begin{enumerate}
    \item $\{u>0\}$ is convex
    \item Any supersolution is larger than $u$, and $u$ is larger than any weak subsolution, and therefore any supersolution which is also a weak subsolution is identical to $u$.
\end{enumerate}
\end{corollary}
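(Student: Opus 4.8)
The plan is to approximate the upper semicontinuous $Q$ from above by a decreasing sequence of continuous functions $Q_k \searrow Q$ on $S^{d-1}$; such a sequence exists because $Q$ is bounded and upper semicontinuous, and we may also keep $Q_k$ bounded below by the same positive constant as $Q$. For each $k$, \tref{minconvex} produces a unique function $u_k$ that is simultaneously a supersolution and a weak subsolution of \eref{minsup} with data $Q_k$, and moreover $U_k := \{u_k > 0\}$ is convex. The monotonicity $Q_{k+1} \le Q_k$ together with the comparison principle (\lref{comparison-convex-above} and \lref{comparison-convex-below}, applied with the continuous anisotropies $Q_k$) will give that the $u_k$ are monotone: a supersolution for $Q_k$ is a supersolution for $Q_{k+1}$, so $u_{k+1} \le u_k$ by comparing the $Q_{k+1}$-subsolution $u_{k+1}$ against the $Q_{k+1}$-supersolution $u_k$. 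Hence $u_k \searrow u_\infty$ for some limit function $u_\infty$, and since all $U_k$ are convex and nested decreasing, $\{u_\infty > 0\}$ is convex (it is the interior of $\bigcap_k \overline{U_k}$, up to the usual care about the boundary, and $u_\infty$ is harmonic there by stable convergence of harmonic functions).

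Next I would identify $u_\infty$ with the minimal supersolution $u$ of the original problem. The key point is that $u_\infty$ is itself a supersolution of \eref{minsup}: it is a uniform (in fact monotone, hence locally uniform after the standard equicontinuity from the uniform Lipschitz bound) limit of functions $u_k$, each of which is a supersolution for $Q_k \ge Q$ and therefore also a supersolution for $Q$ in the sense of \dref{supersol} — the inequality $|\grad\varphi| \le Q_k(\cdot) $ only gets stronger when we replace $Q_k$ by the larger $Q_k$, wait, rather we note a $Q_k$-supersolution satisfies $|\grad \varphi(x)| \le Q_k(\nu) $ and since $Q_k \ge Q$ this is weaker than being a $Q$-supersolution, so instead we invoke \lref{stability} directly: each $u_k$ is a $Q$-\emph{supersolution}? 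No — to conclude stability we need them to be $Q$-supersolutions to begin with. The correct route is: $u_k$ is a weak subsolution for $Q_k$, hence (since $Q_k \ge Q$ pointwise) $|\grad\varphi(x)| \ge Q_k(p) \ge Q(p)$ shows $u_k$ is also a weak subsolution for $Q$; and separately one checks $u_\infty$ is a supersolution for $Q$ using \lref{stability} after first verifying $u_k$ is a $Q$-supersolution, which fails in general. So the cleaner argument: $u_\infty$ is a weak subsolution for $Q$ (the weak subsolution property passes to the limit: if a one-dimensional strict superharmonic $\varphi$ touches $u_\infty$ from above with strict ordering on $\partial U$, it touches $u_k$ from above for large $k$ after a tiny downward shift, giving $|\grad\varphi| \ge Q_k(p) \ge Q(p)$, and then let $k \to \infty$). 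For the supersolution side, $u_\infty$ being the decreasing limit of $Q$-subsolutions that are also $Q_k$-supersolutions — here I would instead argue that $u_\infty$ equals the minimal supersolution $u$ by a two-sided comparison.

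So the cleanest version: let $u$ be the minimal supersolution for $Q$. For each $k$, since $Q \le Q_k$, $u$ is a supersolution for $Q_k$, and $u_k$ is a weak subsolution for $Q_k$ with convex positive set; but to apply \lref{comparison-convex-above} we want the \emph{supersolution} to have convex support — $u$ need not. Instead apply \lref{comparison-convex-below}: $u_k$ is a $Q_k$-supersolution and $u$... is not a subsolution. The right pairing is: $u_k$ supersolution for $Q_k$, any $Q$-weak-subsolution $w$ is a $Q_k$-weak-subsolution, and $\{w > 0\}$ may not be convex, so use \lref{comparison-convex-below} needs $\{v>0\}$ convex on the subsolution — not available. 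Use \lref{comparison-convex-above}: needs the $Q_k$-supersolution to have convex support; $u_k$ does. So $u_k \ge w$ for every $Q$-weak-subsolution $w$. Letting $k \to \infty$, $u_\infty \ge w$ for every weak subsolution $w$. Symmetrically, $u_k$ is also a weak subsolution for $Q_k \ge Q$ hence for $Q$, with convex support, so by \lref{comparison-convex-below} any $Q$-supersolution $s$ satisfies $s \ge u_k$; but that is the wrong direction for a lower bound on $u_\infty$. Hmm — it gives $s \ge u_k \ge u_\infty$ for every supersolution $s$, so in particular $u_\infty \le u$ (taking $s = u$). Combined with: the original minimal supersolution $u$ is a $Q$-supersolution, hence (since it is known to also be a weak subsolution in the minimal case — but we are trying to prove that!) ... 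To close, note $u_\infty \le u$ from the last display, and $u_\infty \ge w$ for all weak subsolutions $w$; if we show $u_\infty$ is a supersolution for $Q$ then by minimality $u \le u_\infty$, giving $u = u_\infty$, which is convex, proving (1), and then (2) follows since any supersolution $s \ge u_k \ge u_\infty = u$ and any weak subsolution $w \le u_\infty = u$. Thus the one remaining task — and the main obstacle — is exactly to show $u_\infty = \lim_k u_k$ is a supersolution of \eref{minsup} for the upper semicontinuous $Q$. I would do this by a blow-up/barrier argument at outer regular boundary points combined with \lref{supersoln-blowup}: since each $U_k$ is convex, $U_\infty = \{u_\infty > 0\}$ is convex hence every boundary point is outer regular, so by \lref{supersoln-blowup} it suffices to control $|\grad u_\infty|$ at such points, which one gets from the $u_k$ via the $C^{1,1}$-domain gradient estimates (\cite[Lemma 3.13]{FeldmanSmart}) and the upper semicontinuity of $Q$ — if a smooth strictly subharmonic $\varphi$ with nonvanishing gradient touches $u_\infty$ from below at $x_0$ with normal $\nu$, then using convexity one extracts, for each $k$, a touching (or near-touching after a dilation) of $u_k$ at a point $x_k \to x_0$ with normal $\nu_k \to \nu$, yielding $|\grad\varphi(x_k)| \lesssim Q_k(\nu_k)$, and since $Q_k \searrow Q$ with $Q$ upper semicontinuous, $\limsup_k Q_k(\nu_k) \le Q(\nu)$ (this uses monotonicity of the $Q_k$: $Q_k(\nu_k) \le Q_m(\nu_k)$ for $k \ge m$, then $\limsup_k Q_k(\nu_k) \le \limsup_k Q_m(\nu_k) \le Q_m(\nu) \to Q(\nu)$ as $m \to \infty$ by continuity of $Q_m$ and then $Q_m \searrow Q$), which gives $|\grad\varphi(x_0)| \le Q(\nu)$ as required.
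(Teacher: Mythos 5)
Your overall architecture matches the paper's: approximate $Q$ from above by continuous $Q_k\searrow Q$, take the minimal solutions $u_k$ from \tref{minconvex}, pass to the monotone limit, identify the limit with $u$ via the supersolution stability plus minimality, and read off convexity of $\{u>0\}$. Your Dini-type computation $\limsup_k Q_k(\nu_k)\le Q(\nu)$ for the supersolution property of the limit is correct and in fact more careful than the paper, which simply invokes \lref{stability} even though the anisotropy varies along the sequence. However, there are two genuine errors. First, the monotonicity is reversed: since $Q_{k+1}\le Q_k$, every $Q_{k+1}$-supersolution is automatically a $Q_k$-supersolution, so the class of $Q_{k+1}$-supersolutions is \emph{smaller} and its infimum is \emph{larger}, i.e.\ $u_{k+1}\ge u_k$ and the sequence increases to $u$; it does not decrease. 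Your justification (``a supersolution for $Q_k$ is a supersolution for $Q_{k+1}$,'' and the use of $u_k$ as a $Q_{k+1}$-supersolution) inverts the implication. The limit positivity set is then an increasing union of convex sets rather than a nested intersection; convexity of $\{u>0\}$ still holds, but not for the reason you give.

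Second, and more seriously, your derivation of $w\le u$ for an arbitrary weak subsolution $w$ rests on the claim that ``any $Q$-weak-subsolution $w$ is a $Q_k$-weak-subsolution.'' This is false: the weak subsolution condition reads $|\grad\varphi|\ge Q(p)$, so since $Q_k\ge Q$ the implication goes the other way (a $Q_k$-weak-subsolution is a $Q$-weak-subsolution, not conversely). Hence you cannot apply \lref{comparison-convex-above} at level $k$ to conclude $u_k\ge w$, and the inequality $u_\infty\ge w$ does not follow from your argument. The repair is exactly the paper's: first conclude $u_\infty=u$ and that $\{u>0\}$ is convex, and only then apply \lref{comparison-convex-above} once, directly at level $Q$ --- that lemma permits upper semicontinuous $Q$ and only requires the \emph{supersolution's} positivity set to be convex --- to obtain $w\le u$. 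Likewise your appeal to \lref{comparison-convex-below} ``for $Q$'' is not available, since that lemma requires continuous $Q$; but the inequality $s\ge u$ for any supersolution $s$ is anyway immediate from the minimality of $u$ and needs no comparison lemma.
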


  \begin{proof}
  Since $Q$ is upper semi-continuous there is a monotone decreasing sequence of continuous $Q^j : S^{d-1} \to (0,\infty)$ with $Q^j \searrow Q$. Let $u_j$ be the minimal supersolution of \eref{minsup} corresponding to the equation $|\grad u^j| \leq Q^j(n_x)$ on $\partial \{u^j>0\}$, and $u$ be the minimal supersolution corresponding to $|\grad u| \leq  Q(n)$.  
  
  Since $Q^j \geq Q$ we have $u$ is a supersolution of $|\grad u| \leq Q^j(n_x)$ on $\partial \{u>0\}$ so
  \[ u \geq u^j.\]
  Similarly, since the $Q^j$ are monotone decreasing, the $u^j$ are a monotone increasing sequence.  
  
  Call $u^\infty := \lim_{j \to \infty} u^j \leq u$.  We claim that $u^\infty$ is a supersolution of \eref{minsup} which will mean $u^\infty \geq u$ and hence $u^\infty = u$.   The supersolution condition is standard stability of the viscosity solution property with respect to uniform convergence, \lref{stability}.

  By \tref{minconvex} the sets $\{u^j>0\}$ are convex for all $j$.  Since they converge monotonically upwards to $\{u>0\}$ that set is convex as well. 
  
  Now let $v$ be a weak subsolution, since we now know that $\{u>0\}$ is convex we can apply \lref{comparison-convex-above} to find $v \leq u$. If $v$ is also a supersolution of \eref{minsup} then $v \geq u$, by the minimality of $u$, and so we conclude that $v = u$. 
  \end{proof}

\bibliography{comparison-articles.bib}
\end{document}